\DeclareMathOperator*{\argmax }{argmax}
\DeclareMathOperator*{\argmin }{argmin}
\DeclareMathOperator*{\argstat }{argstat}
\tikzstyle{startstop} = [rectangle, rounded corners, minimum width=3cm, minimum height=1cm,text centered, draw=black, fill=red!30]
\tikzstyle{io} = [trapezium, trapezium left angle=70, trapezium right angle=110, minimum width=2.5cm, minimum height=1cm, text centered, draw=black, fill=blue!30]
\tikzstyle{process} = [rectangle, minimum width=3cm, minimum height=1cm, text centered, draw=black, fill=orange!30]
\tikzstyle{decision} = [diamond, aspect=2, minimum width=3cm, minimum height=1cm, text centered, text width=2.5cm, draw=black, fill=green!30]
\tikzstyle{arrow} = [thick,->,>=stealth]
\tikzstyle{coord} = [coordinate, on chain, on grid, node distance=6mm and 25mm]
\newcommand{\range}{\mathrm{range}}
\theoremstyle{definition}
\newtheorem{definition}{Definition}[section]
\newtheorem{problem}{Problem}
\newtheorem{theorem}{Theorem}[section]
\newtheorem{lemma}[theorem]{Lemma}
\title{A unified analytical framework for a class of optimal control problems on networked systems}
\author{Mingwu Li, Harry Dankowicz
\thanks{Manuscript received $\cdots$. \emph{Corresponding author: Mingwu Li} }
\thanks{The authors are with the Department of Mechanical Science and Engineering, University of Illinois at Urbana-Champaign, Urbana, IL, 61801, USA (e-mail: mingwul2@illinois.edu, danko@illinois.edu).}}
\date{}
\begin{document}

\maketitle

\begin{abstract}
We consider a class of optimal control problems on networks that generically permits a reduction to a universal set of reference problems without differential constraints that may be solved analytically. The derivation shows that input homogeneity across the network results in universally constant optimal control inputs. These predictions are validated using numerical analysis of problems of synchronization of coupled phase oscillators and spreading dynamics on time-varying networks.
\end{abstract}


\begin{IEEEkeywords}
Nonlinear Systems, optimal Control, networks of autonomous agents, stochastic/uncertain systems, continuation Methods
\end{IEEEkeywords}

\section{Introduction}

In network science, one is often interested not only in the (possibly time-dependent) topological properties of the network, but also in the dynamics of processes occurring on the network. In particular, one may seek to design \emph{optimal control inputs} to manipulate these dynamical processes to achieve a desired outcome within certain constraints. The purpose of this paper is to investigate such optimal control designs in two commonly studied processes, viz., \emph{synchronization} (or consensus) and \emph{spreading} under assumptions of input homogeneity across the network.

Synchronization phenomena are observed widely in nature and science~\cite{synchronization-auto,2016kuramoto}. The \emph{Kuramoto} model, first proposed in 1975~\cite{kuramoto1975,synchronizationBook}, offers a theoretical template for analyzing topology-dependent synchronization in complex networks. In this model, a network of coupled oscillators exhibits relative phase dynamics governed by a distribution of natural frequencies and perturbed by sinusoidal functions of phase differences between neighboring nodes. \textcolor{black}{Notable studies of synchronization in versions of this model with constant coupling strengths include  refs.~\cite{strogatzLinearIncoherent,crawford1994,ott}. In~\cite{leander2015controlling}, time-dependent, heterogeneous coupling strengths derived using optimal control theory are shown to produce desired synchrony more efficiently than is possible in the time-independent case. To avoid the curse of dimensionality that plagues~\cite{leander2015controlling} (the number of control inputs is a quadratic function of the size of the network), we focus on the case of homogeneous coupling and explore whether the advantage of dynamical coupling still holds.}


Spreading is a process that occurs in various guises in real-world networks, from disease propagation in animal social networks to information dissemination in political and marketing campaigns. Over the past decade, many efforts have sought to understand the influence of \emph{temporal patterns} on spreading; in empirical settings~\cite{slowdown,lifetime_reference}, in synthetic networks \cite{lifetime_reference,vertexburst} and in theoretical models~\cite{non-station}. 
The optimization and control of epidemics on static networks has been well explored~\cite{epidemic_control,nowzari2017optimal,yang2016optimal}, while optimization studies in the setting of time-varying networks are limited.

In this paper, we consider a special case of optimal control design on networks governed by dynamical systems of the form $\dot{y}=u(t)f(y)$ in terms of the scalar-valued, time-dependent control input $u(t)$. We obtain examples from the optimal control of synchronization of identical, coupled phase oscillators on static networks with time-varying coupling strength and spreading dynamics on time-varying activity-driven networks. The latter have recently emerged as a powerful paradigm to study epidemic spreading over realistic networks~\cite{activity-driven,zino2016}. Although the contexts of these optimization problems are very different, the universal form of the governing equations allows their analysis using a common analytical framework developed in this study.

We are generally not able to solve optimal control problems explicitly since analytical solutions are rarely available for constraints given by differential equations. Interestingly, by assuming the homogeneous application of the control input across all system states, we show that the search for optimal control inputs may be equivalently (under suitable non-degeneracy conditions) considered in the context of reference optimization problems with only integral constraints on the control input. The relationships between the original problem formulations and the reference optimization problems is here expressed in terms of a time-rescaled dynamical system $y'=f(y)$ with control-independent solutions.

Mirroring the formulation of three distinct optimal control problems, viz., ``maximum synchronization/spreading'', ``minimum effort'', and ``minimum time'', we arrive at three equivalent reference problems, each of which may be solved directly by analysis of the integrand of the cost functional. In particular, this analysis shows that when equivalence applies between the original problems and the reference optimization problems, candidate optimal solutions must be constant functions of time. Thus, under the homogeneity assumption, the flexibility of time dependence does not yield improved performance. 

The remainder of this paper is organized as follows. In Section~\ref{sec:opt-formulation}, we consider the optimal control of finite networks of coupled phase oscillators in the context of the three optimization problems mentioned above. We apply a continuation-based optimization technique to obtain candidate optimal control inputs that are found in all cases to be constant functions of time. Motivated by this observation, an analytical framework for a broader class of network optimal control problems is developed in Section~\ref{sec:analytical}, verifying the observations from Section~\ref{sec:opt-formulation}. We first describe the equivalent reference optimization problems and proceed to establish the equivalence between the corresponding necessary conditions for stationary points under a non-degeneracy condition on a suitably formulated Lie derivative at the terminal time of integration. For a particular choice of cost functional, we use the reference problems to find explicit solutions to the original optimization problems and apply numerical continuation techniques to validate these predictions on the dependence of problem parameters. Section~\ref{sec:two-more-appl} presents the application of the framework to two other dynamical processes, namely, synchronization of phase oscillators in the continuum limit, and spreading dynamics on activity driven networks. Section~\ref{sec:conclusion} concludes this paper with a brief summary and several directions for future study.

\section{Optimization of synchronization of coupled phase oscillators}
\label{sec:opt-formulation}
\subsection{System dynamics}
Consider a Kuramoto model of $N$ coupled phase oscillators with identical natural frequencies $\omega$~\cite{HongStrogatz} on an undirected and unweighted network with adjacency matrix $A$, such that
\begin{equation}
\label{eq:ode-degree}
\dot{x}_i=\mu(t)\sum_{j=1}^N a_{ij}\sin(x_j-x_i),\quad i=1,\cdots,N
\end{equation}
where $x_i+\omega t$ is the phase of the $i$-th oscillator, $a_{ij}\in\{0,1\}$ denotes the $(i,j)$-th entry of $A$, and $\mu(t)\in\mathbb{R}_+$ is a \emph{time-varying} coupling strength. 

\subsection{Characterization of synchronization}
A natural candidate for characterizing the level of synchronization is the amplitude $|\hat{r}|$ of the centroid
\begin{equation}
\hat{r}: =\frac{1}{N}\sum_{j=1}^N e^{\mathrm{i}x_j}
\end{equation}
in the complex plane of the group of phase oscillators. We speak of \emph{incoherent collective motion} when $|\hat{r}|=0$, \emph{partial synchronization} when $|\hat{r}|\in(0,1)$, and \emph{complete synchronization} when $|\hat{r}|=1$.

\subsection{Problem formulation}
\label{sec:optProbForm}
With $\mu(t)$ as the control input and for given initial conditions, we consider three distinct optimal control problems in terms of the cost functional
\begin{equation}
    \mathcal{G}(\mu):=\int_0^Tg(\mu(t))\,\mathrm{d}t
\end{equation}
for some scalar-valued, positive function $g$.

\begin{problem}[Maximum synchronization]\label{Maximizing consensus-Dynamic}
Given $T,C_1\in\mathbb{R}_+$, find
\begin{align}
\label{eq:p1}
 & \argmax_{\mu\in C([0,T],\mathbb{R}_+)} |\hat{r}(T)| \quad \text{s.t.}\quad \mathcal{G}(\mu)= C_1.
\end{align}
\end{problem}

\begin{problem}[Minimum effort]\label{Minimize resource allocation-Dynamic}
Given $T,\underline{r}\in\mathbb{R}_+$, find
\begin{align}
\label{eq:p2}
 & \argmin_{\mu\in C([0,T],\mathbb{R}_+)} \mathcal{G}(\mu) \quad\text{s.t.}\quad|\hat{r}(T)|=\underline{r}.
\end{align}
\end{problem}

\begin{problem}[Minimum time]\label{Minimum time control-Dynamic}
Given $C_1,\underline{r}\in\mathbb{R}_+$, find
\begin{align}
\label{eq:p3}
 & \argmin_{\mu\in C([0,T],\mathbb{R}_+)} T \quad\text{s.t.}\quad \mathcal{G}(\mu)=C_1, \,|\hat{r}(T)|=\underline{r}.
\end{align}
\end{problem}


\subsection{Preliminary numerical results}
\label{sec:sim-finite-motivation}
\subsubsection{Optimization algorithm}
We use a continuation-based framework developed in~\cite{DKK91,staged_adjoint} and implemented in the software package \textsc{coco}~\cite{coco,coco-recipes} to find candidate solutions to these optimal control problems. A brief introduction to this framework is given as follows.
\begin{itemize}
\item \textbf{Step 1}: \emph{Control parameterization}. We restrict attention to truncated polynomial expansions, $\mu(\sigma):=\sum_{i=1}^q p_i\hat{T}_{i}(\sigma)$, in terms of a set of \emph{normalized Chebyshev polynomials of the first kind} with $\hat{T}_1(\sigma)=1/\sqrt{\pi}$ and $\sigma:=2t/T-1$ for $t\in[0,T]$. The coefficients $\{p_i\}_{i=1}^q$ become design variables.
\item \textbf{Step 2}: \emph{Necessary conditions}. The necessary conditions for constrained optimization couple the original integral and differential constraints to a set of \emph{adjoint equations} in terms of unknown \emph{Lagrange multipliers}.  \textsc{coco} provides a predefined library of adjoint constructors for common types of integral, differential, and algebraic constraint operators. This library provides building blocks for staged construction of the complete set of adjoint equations for constrained optimization of integro-differential boundary-value problems~\cite{staged_adjoint}.
\item \textbf{Step 3}: \emph{Successive continuation}. Without restriction to positive-valued $\mu$, we use parameter continuation to satisfy the necessary conditions through a succession of separate stages~\cite{DKK91,staged_adjoint,inequality-ncp}, where each successive run is initialized by the solution from the previous run. Importantly, due to the linearity and homogeneity of the adjoint equations, the first run can be initialized with a solution guess with \emph{zero} Lagrange multipliers. 
\end{itemize}

\subsubsection{Example} Let $N=10$ and consider the example graph in Fig.~\ref{fig:graph-plot}. Suppose that $x_i(0)=2\pi(i-1)/10$ for $1\leq i\leq 10$ such that $\hat{r}(0)=0$. Let $g(\mu)=\mu^2$ and consider polynomial expansions with $q=10$ and initial solution guess $p_1=1$, $p_{2\ldots 10}=0$.

\begin{figure}[ht]
\centering
\includegraphics[width=2.3in]{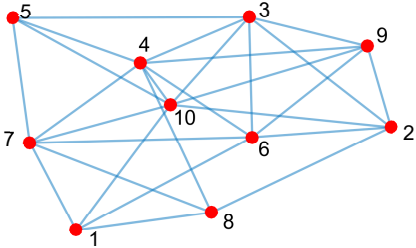}
\caption{An example network graph on 10 nodes.}
\label{fig:graph-plot}
\end{figure}

Let $T=3$ and $C_1=1$ in Problem~\ref{Maximizing consensus-Dynamic}. Using the optimization algorithm, we locate the five stationary points (SPs) listed in Table~\ref{FPs-P1} in the computational domain defined by $0.01\leq|\hat{r}(T)|\leq1$. The first and fourth of these points share the same value for the objective function $|\hat{r}(T)|$ but correspond to different $\mu$. This holds also for the second and the fifth SPs. In all four instances, $\mu$ is time-dependent and negative on a subset of $[0,T]$. In contrast, $\mu$ is positive and constant for the third SP, which also corresponds to the maximum value of $|\hat{r}(T)|$.

\begin{table}[h]
\begin{center}
\begin{tabular}{ c|c|c|c|c } 
 \hline
  SP & $|\hat{r}(T)|$ & $p_1$ & $p_2$ & $p_{3,\cdots,10}$\\ 
 \hline
 1 & 0.0177 & 0.3576 & 1.1743 & 0\\ 
 2 & 0.0327 & 0.2232 & 1.2231 & 0\\ 
 3 & 0.8889 & 1.0233 & 0 & 0\\ 4 & 0.0177 & 0.3576 & -1.1743 & 0\\ 
 5 & 0.0327 & 0.2232 & -1.2231 & 0\\ 
 \hline
\end{tabular}
\caption{Five stationary points (SPs) found for Problem~\ref{Maximizing consensus-Dynamic}.}
\label{FPs-P1}
\end{center}
\end{table}

Let $T=3$ and $\underline{r}=0.9$ in Problem~\ref{Minimize resource allocation-Dynamic}. In this case, we locate a single stationary point with  $p_1\approx 1.0356$ and $p_{2,...,10}\approx 0$. The corresponding $\mu$ is again constant.

Finally, let $C_1=1$ and $\underline{r}=0.9$ in Problem~\ref{Minimum time control-Dynamic}. We again locate only one stationary point. The minimum value $T=3.0721$ is obtained when $p_1\approx 1.0112$ and $p_{2,...,10}\approx 0$. The corresponding $\mu$ is again constant.

The search for optimal solutions does not appear to benefit from the flexibility of time-dependence of the control input. The rigorous analysis in the next section explains this observation and shows how the results may be generalized to other network optimization problems.

\section{Analytical framework}
\label{sec:analytical}
\subsection{Three reference optimization problems}
The three optimal control problems in Section~\ref{sec:optProbForm} are closely related to the following three reference problems in terms of the functional
\begin{equation}
    \mathcal{I}(\mu):=\int_0^T\mu(t)\,\mathrm{d}t.
\end{equation}

\begin{problem}\label{P1}
Given $T,C_1\in\mathbb{R}_+$, find
\begin{equation}
    \argstat_{\mu\in C([0,T],\mathbb{R}_+)}\mathcal{I}(\mu)
    \quad \text{s.t.}\quad \mathcal{G}(\mu)= C_1.
\end{equation}
\end{problem}

\emph{Solution:} In terms of the Lagrangian
\begin{equation}
L := \int_0^T\mu(t)\,\mathrm{d}t + \lambda_{\mathrm{ref}}\left(\int_0^Tg(\mu(t))\,\mathrm{d}t-C_1\right)
\end{equation}
and the Lagrange multiplier $\lambda_\mathrm{ref}\in\mathbb{R}$, $\delta L(\mu(t),\lambda_\mathrm{ref})=0$ implies that
\begin{equation}
\label{eq:adj-p1}
\mathcal{G}(\mu)=C_1,\, 1+\lambda_{\mathrm{ref}}g'(\mu(t)) = 0.
\end{equation}
Lemma~\ref{sec:sol2p1} implies that a locally unique solution $\mu$ is a positive constant $\mu^\ast$ under some non-degeneracy conditions on $g$ at $\mu^\ast$. The integral constraint reduces to $Tg(\mu^\ast)=C_1$, which may be inverted to find $\mu^\ast$.\qed

With $g(\mu)=\mu^2$ we obtain
\begin{equation}
\label{eq:sol2p4}
\mu^\ast = \sqrt{C_1/T},\,\lambda_{\mathrm{ref}} = -\sqrt{T/C_1}/2.
\end{equation}

\begin{problem}\label{P2}
Given $T,C_2\in\mathbb{R}_+$, find
\begin{equation}
    \argstat_{\mu\in C([0,T],\mathbb{R}_+)}\mathcal{G}(\mu)
    \quad \text{s.t.}\quad \mathcal{I}(\mu)= C_2.
\end{equation}
\end{problem}

\emph{Solution:} In terms of the Lagrangian
\begin{equation}
L := \int_0^Tg(\mu(t))\,\mathrm{d}t + \lambda_{\mathrm{ref}}\left(\int_0^T\mu(t)\,\mathrm{d}t-C_2\right)
\end{equation}
and the Lagrange multiplier $\lambda_\mathrm{ref}\in\mathbb{R}$,
$\delta L(\mu(t),\lambda_\mathrm{ref})=0$ implies that
\begin{equation}
\label{eq:adj-p2}
\mathcal{I}(\mu)=C_2,\, \lambda_{\mathrm{ref}}+g'(\mu(t)) = 0.
\end{equation}
Lemma~\ref{sec:sol2p2} implies that a globally unique solution $\mu$ is a positive constant $\mu^\ast$ under some non-degeneracy conditions on $g$ at $\mu^\ast$. The integral constraint reduces to $T\mu^\ast=C_2$, which may be inverted to find $\mu^\ast$, \emph{independently} of $g$.\qed

With $g(\mu)=\mu^2$ we obtain
\begin{equation}
\label{eq:sol2p5}
\mu^\ast = C_2/T,\,\lambda_{\mathrm{ref}} = -2C_2/T.
\end{equation}

\begin{problem}\label{P3}
Given $C_1,C_2\in\mathbb{R}_+$, find
\begin{equation}
    \argstat_{\mu\in C([0,T],\mathbb{R}_+)}T
    \quad \text{s.t.}\quad \mathcal{G}(\mu)=C_1,\,\mathcal{I}(\mu)= C_2.
\end{equation}
\end{problem}

\emph{Solution:} In terms of the Lagrangian
\begin{align}
L & := T+\lambda_{1,\mathrm{ref}}\left(\int_0^Tg(\mu(t))\,\mathrm{d}t-C_1\right) \nonumber\\
 & \quad+ \lambda_{2,\mathrm{ref}}\left(\int_0^T\mu(t)\,\mathrm{d}t-C_2\right)
\end{align}
and the Lagrange multipliers $\lambda_{1,\mathrm{ref}}\in\mathbb{R}$ and $\lambda_{2,\mathrm{ref}}\in\mathbb{R}$,
$\delta L(\mu(t),T,\lambda_{1,\mathrm{ref}},\lambda_{2,\mathrm{ref}})=0$ implies that
\begin{gather}
\label{eq:adj-p3-l1}
\mathcal{G}(\mu)=C_1,\, \mathcal{I}(\mu)=C_2,\\ \lambda_{1,\mathrm{ref}}g'(\mu(t))+\lambda_{2,\mathrm{ref}}=0,\\
\label{eq:adj-p3-T}
\quad 1+\lambda_{1,\mathrm{ref}}g(\mu(T))+\lambda_{2,\mathrm{ref}}\mu(T)=0.
\end{gather}
Lemma~\ref{sec:sol2p3} implies that a locally unique solution $\mu$ is a positive constant $\mu^\ast$ under some non-degeneracy conditions on $g$ at $\mu^\ast$. The two integral equations then reduce to two algebraic conditions, namely, $Tg(\mu^\ast)=C_1$ and $T\mu^\ast=C_2$, which may be inverted to find $\mu^\ast$ and $T$.\qed

With $g(\mu)=\mu^2$ we obtain
\begin{gather}
\mu^\ast=C_1/C_2,\, T=C_2^2/C_1,\label{eq:sol2p6}\\ \lambda_{1,\mathrm{ref}} = C_2^2/C_1^2,\, \lambda_{2,\mathrm{ref}} = -2C_2/C_1.\label{eq:sol2p6-adj}
\end{gather}

\subsection{Three essential theorems}
\label{par:reformulation}
The three theorems in this section establish the relationship between the three optimal control problems in Section~\ref{sec:optProbForm} and the three reference problems.

\begin{definition}
Consider a non-autonomous system on $\mathbb{R}^n$ of the form $\dot{z}=\mu(t)h(z,p)$, where  $\mu:[0,\infty)\to\mathbb{R}_+$, $h:\mathbb{R}^n\times\mathbb{R}^s\mapsto\mathbb{R}^n$, and $p$ are the system parameters. We say that such a dynamical system is \emph{separable}.
\end{definition}

For a separable dynamical system, the following result follows from the fact that $\mu(t)$ is positive. 
\begin{lemma}
\label{lema:odes}
The invertible time-rescaling $\tau(t):=\int_0^t \mu(s)\,\mathrm{d}s$ transforms the separable non-autonomous system $\dot{z}=\mu(t)h(z,p)$ to the autonomous system 
\begin{equation}
\label{eq:ref-auto}
\hat{z}'=h(\hat{z},p),
\end{equation}
where $\hat{z}(\tau)=z(t(\tau))$.
\end{lemma}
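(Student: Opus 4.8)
The plan is to establish that the proposed substitution is a genuine (smooth, invertible) change of the independent variable and then to apply the chain rule, at which point the factors of $\mu$ cancel. First I would exploit the positivity hypothesis $\mu:[0,\infty)\to\mathbb{R}_+$: since $\dot\tau(t)=\mu(t)>0$ for every $t$, the map $\tau(t)=\int_0^t\mu(s)\,\mathrm{d}s$ is strictly increasing and therefore injective on its domain. Assuming $\mu$ is continuous (so that $\tau$ is $C^1$), the inverse function theorem guarantees a differentiable inverse $t(\tau)$ with $\frac{\mathrm{d}t}{\mathrm{d}\tau}=1/\mu(t(\tau))$. This is precisely what makes the rescaling invertible and justifies the well-posedness of the definition $\hat z(\tau)=z(t(\tau))$.

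Next I would differentiate $\hat z$ with respect to the new variable. By the chain rule, $\hat z'(\tau)=\dot z(t(\tau))\,\frac{\mathrm{d}t}{\mathrm{d}\tau}$. Substituting the governing separable equation $\dot z=\mu(t)h(z,p)$ together with $\frac{\mathrm{d}t}{\mathrm{d}\tau}=1/\mu(t(\tau))$, the two occurrences of $\mu$ cancel, yielding $\hat z'(\tau)=h(z(t(\tau)),p)=h(\hat z(\tau),p)$, which is exactly the autonomous system~\eqref{eq:ref-auto}.

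There is no real obstacle here; the computation is immediate once invertibility is in place. The only point I would flag carefully is that \emph{strict} positivity of $\mu$ (rather than mere non-negativity) is essential on two counts: it yields the strict monotonicity needed for global invertibility of $\tau$, and it ensures that $1/\mu$ is well defined so that the cancellation producing the control-independent right-hand side is legitimate. Under these standing assumptions the conclusion follows directly from the chain rule.
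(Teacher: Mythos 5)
Your proof is correct and follows exactly the route the paper intends: the paper itself only remarks that the result ``follows from the fact that $\mu(t)$ is positive,'' and your argument---strict positivity gives strict monotonicity and hence invertibility of $\tau$, after which the chain rule with $\mathrm{d}t/\mathrm{d}\tau = 1/\mu(t(\tau))$ cancels the factors of $\mu$---is precisely the standard filling-in of that remark. No discrepancies to report.
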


The Kuramoto model in \eqref{eq:ode-degree} is separable with $z=(x_1,\cdots,x_N)$ and $p=\emptyset$. We may represent its dynamics in the rescaled time domain, as exemplified in Fig.~\ref{fig:Finite_rescaled_centroid-finite}.

\begin{figure}[ht]
\centering
\includegraphics[width=2.8in]{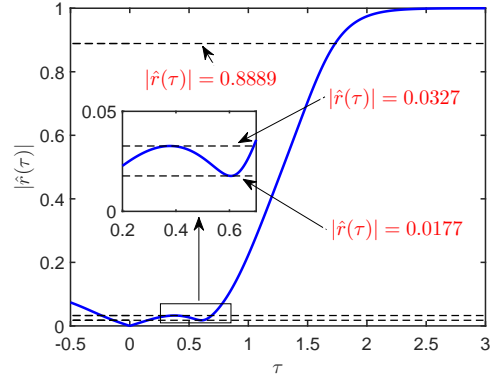}
\caption{Time-rescaled dynamics of the Kuramoto model in \eqref{eq:ode-degree} on the graph in Fig.~\ref{fig:graph-plot} with $x_i(0)=2\pi(i-1)/10$ for $i=1,\ldots,10$.}
\label{fig:Finite_rescaled_centroid-finite}
\end{figure}

Suppose that 
\begin{equation}
\label{eq:sepOdeZ}
\dot{z}=\mu(t)h(z,p),\quad z(0)=z_0, \quad t\in[0,T]
\end{equation}
and consider a function $\Phi:\mathbb{R}^n\rightarrow\mathbb{R}$.

\begin{theorem}
\label{theo1}
Given $T,C_1\in\mathbb{R}_+$, suppose that $\mu^\ast\in C([0,T],\mathbb{R}_+)$ satisfies the first-order necessary conditions for a stationary point of the functional $\mu\mapsto\Phi(z(T))$ subject to $\mathcal{G}(\mu)=C_1$. Then, if
\begin{equation}
    \Phi_h:= \nabla\Phi(z(T))\cdot h(z(T),p)\neq0,
\end{equation}
it follows that $\mu^\ast$ satisfies the first-order necessary conditions of Problem~\ref{P1} and vice versa.
\end{theorem}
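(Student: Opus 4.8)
The plan is to write down the first-order necessary conditions for the original problem in adjoint (costate) form and then show that they collapse onto the single pointwise condition appearing in Problem~\ref{P1}. Introducing a costate $\eta:[0,T]\to\mathbb{R}^n$ for the differential constraint $\dot z=\mu h(z,p)$ and a scalar multiplier $\lambda$ for $\mathcal{G}(\mu)=C_1$, stationarity of the augmented functional yields the adjoint terminal-value problem $\dot\eta=-\mu\,(\partial_z h)^\top\eta$ with $\eta(T)=\nabla\Phi(z(T))$, together with the original integral constraint $\mathcal{G}(\mu)=C_1$ and the pointwise stationarity condition $\eta(t)^\top h(z(t),p)=\lambda\,g'(\mu(t))$ for all $t\in[0,T]$.

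First I would establish the key structural fact that the scalar $\phi(t):=\eta(t)^\top h(z(t),p)$ is constant along the trajectory. Differentiating $\phi$ and substituting the state equation $\dot z=\mu h$ and the adjoint equation shows that the two resulting contributions cancel, so $\dot\phi\equiv 0$; evaluating at $t=T$ with the terminal condition $\eta(T)=\nabla\Phi(z(T))$ then gives $\phi(t)\equiv\nabla\Phi(z(T))\cdot h(z(T),p)=\Phi_h$. This is the crux of the argument, and it is precisely the infinitesimal counterpart of Lemma~\ref{lema:odes}: after the rescaling $\tau=\int_0^t\mu\,\mathrm ds$ the state becomes control-independent, so $z(T)=\hat z(\mathcal{I}(\mu))$ and $\Phi(z(T))$ depends on $\mu$ only through $\mathcal{I}(\mu)$, with $\Phi_h$ being the sensitivity $\mathrm d\Phi/\mathrm d\tau$ evaluated at the terminal time.

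With $\phi(t)\equiv\Phi_h$ in hand, the pointwise stationarity condition becomes $\Phi_h=\lambda\,g'(\mu(t))$ for every $t$. Invoking $\Phi_h\neq0$, I would set $\lambda_{\mathrm{ref}}:=-\lambda/\Phi_h$ and observe that $\Phi_h=\lambda g'(\mu(t))$ is algebraically identical to the reference condition $1+\lambda_{\mathrm{ref}}g'(\mu(t))=0$ in \eqref{eq:adj-p1}; since the integral constraint $\mathcal{G}(\mu)=C_1$ is common to both problems, this settles the forward implication. For the converse, given a solution of Problem~\ref{P1} with multiplier $\lambda_{\mathrm{ref}}$, I would reconstruct the costate as the unique solution of the adjoint terminal-value problem (so that $\phi\equiv\Phi_h$ holds automatically), set $\lambda:=-\Phi_h\lambda_{\mathrm{ref}}$, and verify the pointwise condition directly. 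The hypothesis $\Phi_h\neq0$ is exactly the non-degeneracy that makes the correspondence $\lambda\mapsto\lambda_{\mathrm{ref}}$ invertible and keeps $\lambda$ nonzero, so the equivalence runs in both directions.

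The main obstacle I anticipate is the conservation identity $\dot\phi\equiv0$; once that is available the rest is bookkeeping. Some care is also needed with the sign conventions in the adjoint terminal condition and in the definition of $\lambda_{\mathrm{ref}}$, and with the implicit non-degeneracy of $g$ (so that $g'(\mu)$ constant forces $\mu$ constant) that is used when translating the shared pointwise condition into the assertion that the stationary $\mu^\ast$ is a positive constant.
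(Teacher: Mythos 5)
Your proof is correct, but it takes a genuinely different route from the paper's. The paper never introduces a costate: it invokes Lemma~\ref{lema:odes} to write $z(T)=\hat z\left(\mathcal{I}(\mu)\right)$, so that the objective depends on $\mu$ only through the scalar $\mathcal{I}(\mu)$; the variation $\delta\Phi(z(T))=\Phi_h\int_0^T\delta\mu(t)\,\mathrm{d}t$ then follows in one line, the pointwise condition $\Phi_h+\lambda g'(\mu(t))=0$ and the identification $\lambda_{\mathrm{ref}}=\lambda/\Phi_h$ are immediate, and the same computation read backwards gives the converse. You instead retain the differential constraint, write the standard costate conditions $\dot\eta=-\mu\,(\partial_z h)^\top\eta$, $\eta(T)=\nabla\Phi(z(T))$, and prove the conservation identity $\frac{\mathrm{d}}{\mathrm{d}t}\bigl(\eta^\top h(z,p)\bigr)\equiv 0$, which is the differential encoding of the same separable structure; the cancellation you describe does occur, and reconstructing $\eta$ from the (linear) terminal-value problem makes your converse direction sound. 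What your route buys: it matches the adjoint-based formulation actually used by the numerical method (the staged adjoint construction in \textsc{coco}), so it explains directly why the computed multipliers satisfy relations such as $\lambda=\lambda_{\mathrm{ref}}\Phi_h$ reported in Section~\ref{sec:two-more-appl}, and the conservation law is a reusable standalone fact. What the paper's route buys: brevity and conceptual transparency---eliminating the differential constraint before writing first-order conditions makes it obvious that only $\mathcal{I}(\mu)$ matters, which is the heart of the reduction to Problem~\ref{P1}. Two bookkeeping remarks: your pointwise condition $\eta^\top h=\lambda g'(\mu)$ carries the opposite sign convention for $\lambda$ from the paper's (hence your $\lambda_{\mathrm{ref}}=-\lambda/\Phi_h$ versus the paper's $\lambda_{\mathrm{ref}}=\lambda/\Phi_h$ in matching \eqref{eq:adj-p1}), which is internally consistent as you note; and the non-degeneracy of $g$ you mention at the end belongs to Lemma~\ref{sec:sol2p1} (constancy of the stationary $\mu$), not to Theorem~\ref{theo1} itself, which asserts only the equivalence of the necessary conditions.
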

\begin{proof}
By definition,
\begin{equation}
    z(T)=\hat{z}\left(\mathcal{I}(\mu)\right) = z_0+\int_0^{\mathcal{I}(\mu)} h(\hat{z}(\tau),p)\,\mathrm{d}\tau
\end{equation}
and, consequently,
\begin{align}
\label{eq:var-zhat}
\delta\hat{z}\left(\mathcal{I}(\mu)\right) &=h(\hat{z}\left(\mathcal{I}(\mu)\right),p) \int_0^T \delta \mu(t)\,\mathrm{d}t.
\end{align}
In terms of the Lagrangian
\begin{equation}
L := \Phi(z(T)) + \lambda\left(\int_0^Tg(\mu(t))\,\mathrm{d}t-C_1\right)
\end{equation}
and the Lagrange multiplier $\lambda\in\mathbb{R}$, $\delta L(\mu(t),\lambda)=0$ implies that
\begin{equation}
\mathcal{G}(\mu)=C_1,\,\Phi_h+\lambda g'(\mu(t)) = 0.
\end{equation}
If $\Phi_h\neq0$, these necessary conditions transform to \eqref{eq:adj-p1} and vice versa with $\lambda_{\mathrm{ref}} = {\lambda}/\Phi_h$ and identical $\mu$.
\end{proof}

The scalar $\Phi_h$ equals the Lie derivative of $\Phi$ in the time-rescaled system at the terminal time. For the five candidate $\mu$ listed in Table~\ref{FPs-P1} and with $\Phi(z(t))=|\hat{r}(t)|$, Fig.~\ref{fig:Finite_rescaled_centroid-finite} shows that $\Phi_h\ne 0$ only for the constant $\mu$. The remaining choices correspond to stationary values of $\Phi(z(T))$ given that $\mathcal{I}(\mu)=\tau^\ast$ and $\mathcal{G}(\mu)=C_1$ for $\tau^\ast$ corresponding to the local maximum and minimum, respectively, in the graph in Fig.~\ref{fig:Finite_rescaled_centroid-finite}. For such $\tau^\ast$, more than one $\mu$ thus gives rise to the same stationary value of the objective function, consistent with the observation made in conjunction with Table~\ref{FPs-P1}.





\begin{theorem}
\label{theo2}
Given $T\in\mathbb{R}_+$ and $\underline{\Phi}\in\mathbb{R}$, let $\mu^\ast\in C([0,T],\mathbb{R}_+)$ satisfy the first-order necessary conditions for a stationary point of the functional $\mu\mapsto\mathcal{G}(\mu)$ subject to  $\Phi(z(T))=\underline{\Phi}$. Then, if $\Phi_h\neq0$, it follows that $\mu^\ast$ satisfies the first-order necessary conditions for Problem~\ref{P2} and vice versa with $\Phi\left(\hat{z}\left(C_2\right)\right)-\underline{\Phi}=0$.
\end{theorem}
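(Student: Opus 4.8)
The plan is to mirror the proof of Theorem~\ref{theo1}, now interchanging the roles of objective and constraint. First I would form the Lagrangian
\begin{equation}
L := \int_0^T g(\mu(t))\,\mathrm{d}t + \lambda\left(\Phi(z(T))-\underline{\Phi}\right)
\end{equation}
with multiplier $\lambda\in\mathbb{R}$ and impose $\delta L(\mu(t),\lambda)=0$. The objective contributes $\int_0^T g'(\mu(t))\,\delta\mu(t)\,\mathrm{d}t$, while the terminal term is handled exactly as in Theorem~\ref{theo1}: writing $z(T)=\hat{z}(\mathcal{I}(\mu))$ and using~\eqref{eq:var-zhat}, one gets $\delta\Phi(z(T))=\nabla\Phi(z(T))\cdot\delta\hat{z}(\mathcal{I}(\mu))=\Phi_h\int_0^T\delta\mu(t)\,\mathrm{d}t$. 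Collecting terms, stationarity requires $g'(\mu(t))+\lambda\Phi_h=0$ on $[0,T]$, together with the constraint $\Phi(z(T))=\underline{\Phi}$.

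Next I would invoke $\Phi_h\neq0$ to identify these conditions with~\eqref{eq:adj-p2}. Setting $\lambda_{\mathrm{ref}}=\lambda\Phi_h$ makes the stationarity equation $g'(\mu(t))+\lambda_{\mathrm{ref}}=0$ coincide exactly, with identical $\mu$; the map $\lambda\mapsto\lambda_{\mathrm{ref}}$ is a bijection precisely because $\Phi_h\neq0$, which supplies both directions for the multiplier. It then remains to reconcile the two constraints, which, unlike in Theorem~\ref{theo1}, are not of the same form.

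The principal obstacle is exactly this constraint reconciliation. Since $\hat{z}$ is independent of $\mu$ by Lemma~\ref{lema:odes}, the terminal constraint reads $\Phi(\hat{z}(\mathcal{I}(\mu)))=\underline{\Phi}$. Defining $C_2$ through the relation $\Phi(\hat{z}(C_2))=\underline{\Phi}$ stated in the theorem, one sees immediately that $\mathcal{I}(\mu)=C_2$ implies $\Phi(z(T))=\underline{\Phi}$. For the converse I would appeal to $\Phi_h\neq0$ once more: because $\frac{\mathrm{d}}{\mathrm{d}\tau}\Phi(\hat{z}(\tau))=\nabla\Phi(\hat{z}(\tau))\cdot h(\hat{z}(\tau),p)$ equals $\Phi_h\neq0$ at $\tau=C_2$, the scalar map $\tau\mapsto\Phi(\hat{z}(\tau))$ is locally invertible by the implicit function theorem. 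Hence, near $\mu^\ast$, fixing $\Phi(z(T))=\underline{\Phi}$ is equivalent to fixing $\mathcal{I}(\mu)=C_2$, so the admissible sets of the two problems coincide locally. Together with the multiplier identification, this shows that $\mu^\ast$ satisfies the necessary conditions of Problem~\ref{P2} and vice versa, completing the argument.
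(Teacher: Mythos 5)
Your proof is correct and follows essentially the same route as the paper: the same Lagrangian, the same stationarity conditions $\Phi(z(T))=\underline{\Phi}$ and $g'(\mu(t))+\lambda\Phi_h=0$, and the same identification $\lambda_{\mathrm{ref}}=\lambda\Phi_h$ together with the coupling $\Phi(\hat{z}(C_2))=\underline{\Phi}$. The only differences are expository: you spell out the variation of the terminal term (which the paper reuses from the proof of Theorem~\ref{theo1} via \eqref{eq:var-zhat}) and you justify, via local invertibility of $\tau\mapsto\Phi(\hat{z}(\tau))$ at $\tau=C_2$, the identification $\mathcal{I}(\mu)=C_2$ that the paper leaves implicit in \eqref{eq:C2Phi}.
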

\begin{proof}
In terms of the Lagrangian
\begin{equation}
L := \int_0^Tg(\mu(t))\mathrm{d}t + \lambda\left(\Phi(z(T))-\underline{\Phi}\right)
\end{equation}
and the Lagrange multiplier $\lambda$, $\delta L(\mu(t),\lambda)=0$ implies that
\begin{equation}
\label{eq:theo2-adj-eqs}
\Phi(z(T))-\underline{\Phi}=0,\, \lambda\Phi_h+g'(\mu(t)) = 0.
\end{equation}
Provided that $\Phi_h\ne 0$, these necessary conditions transform to \eqref{eq:adj-p2} and vice versa with $C_2$ coupled to $\underline{\Phi}$ through
\begin{equation}
\label{eq:C2Phi}
\underline{\Phi}=\Phi(z(T))=\Phi(\hat{z}(\mathcal{I}(\mu)))=\Phi(\hat{z}(C_2)),
\end{equation}
$\lambda_{\mathrm{ref}}=\lambda\Phi_h$ and identical $\mu$.
\end{proof}


\begin{theorem}
\label{theo3}
Given $C_1\in\mathbb{R}_+$ and $\underline{\Phi}\in\mathbb{R}$, let $\mu^\ast$ and $T^\ast$ satisfy the first-order necessary conditions for a stationary point of the functional $(\mu,T)\mapsto T$ subject to the constraints $\mathcal{G}(\mu)=C_1$ and $\Phi(z(T))=\underline{\Phi}$. Then, if $\Phi_h\neq0$, it follows that $\mu^\ast$ and $T^\ast$ satisfy the first-order necessary conditions for Problem~\ref{P3} and vice versa with $\Phi\left(\hat{z}\left(C_2\right)\right)-\underline{\Phi}=0$.
\end{theorem}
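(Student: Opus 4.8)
The plan is to mirror the proofs of Theorems~\ref{theo1} and~\ref{theo2}, but now account for the fact that both the control $\mu$ and the terminal time $T$ are free variables. The crucial new ingredient, absent from the two earlier theorems, is the contribution of the variation of $T$ to each constraint. I would begin by forming the Lagrangian
\begin{equation}
L := T + \lambda_1\left(\mathcal{G}(\mu)-C_1\right) + \lambda_2\left(\Phi(z(T))-\underline{\Phi}\right)
\end{equation}
with multipliers $\lambda_1,\lambda_2\in\mathbb{R}$, and then compute the full variation $\delta L$ with respect to both $\mu$ and $T$, treating these as independent.

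The key step is to express $z(T)$ through the time-rescaled flow as $z(T)=\hat{z}(\mathcal{I}(\mu))$, exactly as in the proof of Theorem~\ref{theo1}. Because the upper integration limit in $\mathcal{I}(\mu)=\int_0^T\mu(t)\,\mathrm{d}t$ now also depends on $T$, the total variation picks up a boundary term:
\begin{equation}
\delta\mathcal{I}(\mu)=\int_0^T\delta\mu(t)\,\mathrm{d}t+\mu(T)\,\delta T.
\end{equation}
Applying the chain rule with $\hat{z}'=h(\hat{z},p)$ then yields $\delta z(T)=h(z(T),p)\,\delta\mathcal{I}(\mu)$, so that $\delta\Phi(z(T))=\Phi_h\,\delta\mathcal{I}(\mu)$. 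An entirely analogous computation for the integral constraint gives $\delta\mathcal{G}(\mu)=\int_0^T g'(\mu(t))\delta\mu(t)\,\mathrm{d}t+g(\mu(T))\,\delta T$, where the final term again arises from varying the upper limit.

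Collecting the coefficients of the independent variations $\delta\mu(t)$ and $\delta T$ in $\delta L=0$, together with the two constraints, I expect to obtain
\begin{gather}
\mathcal{G}(\mu)=C_1,\quad \Phi(z(T))=\underline{\Phi},\\
\lambda_1 g'(\mu(t))+\lambda_2\Phi_h=0,\\
1+\lambda_1 g(\mu(T))+\lambda_2\Phi_h\,\mu(T)=0.
\end{gather}
Provided $\Phi_h\neq0$, these transform into the necessary conditions \eqref{eq:adj-p3-l1}--\eqref{eq:adj-p3-T} of Problem~\ref{P3} under the substitutions $\lambda_{1,\mathrm{ref}}=\lambda_1$ and $\lambda_{2,\mathrm{ref}}=\lambda_2\Phi_h$, with the integral constraint $\mathcal{I}(\mu)=C_2$ supplied by the identity $\Phi(\hat{z}(C_2))-\underline{\Phi}=0$ for $C_2=\mathcal{I}(\mu)$, precisely as in Theorem~\ref{theo2}. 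The correspondence is invertible, giving the ``vice versa'' direction.

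The main obstacle is bookkeeping rather than conceptual: I must track the two distinct channels through which $T$ enters the problem---directly through the integration bound of $\mathcal{G}$, and indirectly through the terminal state $z(T)$ via the dynamics $\dot{z}(T)=\mu(T)h(z(T),p)$---and keep the $\delta T$ contributions separate from the $\delta\mu$ contributions so that the coefficient of $\delta T$ reproduces the transversality-type condition \eqref{eq:adj-p3-T}. The non-degeneracy hypothesis $\Phi_h\neq0$ is exactly what is needed to render the map $\lambda_2\mapsto\lambda_{2,\mathrm{ref}}=\lambda_2\Phi_h$ a bijection, ensuring the two sets of necessary conditions are genuinely equivalent.
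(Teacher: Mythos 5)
Your proposal is correct and follows essentially the same route as the paper: the same Lagrangian $L = T+\lambda_1(\mathcal{G}(\mu)-C_1)+\lambda_2(\Phi(z(T))-\underline{\Phi})$, the same resulting stationarity conditions \eqref{theo3-adj-1-2}--\eqref{theo3-adj-4}, and the same identifications $\lambda_{1,\mathrm{ref}}=\lambda_1$, $\lambda_{2,\mathrm{ref}}=\lambda_2\Phi_h$ with $C_2$ coupled to $\underline{\Phi}$ via \eqref{eq:C2Phi}. The only difference is that you spell out the variational bookkeeping (the boundary terms $\mu(T)\,\delta T$ and $g(\mu(T))\,\delta T$ and the identity $\delta\Phi(z(T))=\Phi_h\,\delta\mathcal{I}(\mu)$ via $z(T)=\hat{z}(\mathcal{I}(\mu))$) that the paper leaves implicit when it asserts the consequences of $\delta L=0$.
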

\begin{proof}
In terms of the Lagrangian
\begin{align}
L &:= T+\lambda_1\left(\int_0^Tg(\mu(t))\,\mathrm{d}t-C_1\right) \nonumber\\
&\quad + \lambda_2\left(\Phi(z(T))-\underline{\Phi}\right)
\end{align}
and the Lagrange multipliers $\lambda_1,\lambda_2\in\mathbb{R}$, $\delta L(\mu(t),T,\lambda_1,\lambda_2)=0$ implies that
\begin{gather}
\mathcal{G}(\mu)-C_1=0,\, \Phi(z(T))-\underline{\Phi}=0,\label{theo3-adj-1-2}\\
 \lambda_1g'(\mu(t))+\lambda_2\Phi_h=0,\label{theo3-adj-3}\\
 1+\lambda_1g(\mu(T))+\lambda_2\Phi_h\mu(T)=0.\label{theo3-adj-4}
\end{gather}
Provided that $\Phi_h\ne 0$, these necessary conditions transform to \eqref{eq:adj-p3-l1}-\eqref{eq:adj-p3-T} and vice versa with $\lambda_{1,\mathrm{ref}}=\lambda_1$, $\lambda_{2,\mathrm{ref}}=\lambda_2\Phi_h$, identical $\mu(t)$, and $C_2$ coupled to $\underline{\Phi}$ through \eqref{eq:C2Phi}.
\end{proof}


\subsection{Application to the Kuramoto model}
\label{sec:ref2origin}

\subsubsection{Reduction of optimal control problems}
\label{sec:red-opt}
With $\Phi(z(T)):=|\hat{r}(T)|$ and $\underline{\Phi}=\underline{r}$, Theorems~\ref{theo1},~\ref{theo2} and~\ref{theo3} transform the search for candidate solutions to Problems~\ref{Maximizing consensus-Dynamic},~\ref{Minimize resource allocation-Dynamic} and~\ref{Minimum time control-Dynamic} to that of solutions to Problems~\ref{P1},~\ref{P2} and~\ref{P3}, provided that $\Phi_h\neq0$. Therefore, we have transferred the three optimization problems with differential constraints into reference problems without differential constraints, which are much easier to solve. Indeed, since the optimal $\mu$ for the reference optimization problems are constant (under certain non-degeneracy conditions on $g$), this must hold also for the three optimal control problems in Section~\ref{sec:optProbForm}, consistent with the preliminary observations in Section~\ref{sec:sim-finite-motivation}.

We can apply the solution to Problem~\ref{P1} directly to find the optimal $\mu$ for Problem~\ref{Maximizing consensus-Dynamic}. In contrast, for Problems~\ref{Minimize resource allocation-Dynamic} and~\ref{Minimum time control-Dynamic}, we must first find $C_2$ from the coupling condition \eqref{eq:C2Phi} for a given $\underline{r}$, for example using continuation in $C_2$.

\subsubsection{Validation of analytical solutions}
\label{sec:valid-finite}
Suppose again that $g(\mu)=\mu^2$. The optimal $\mu(t)\equiv\mu^\ast$ are given explicitly in \eqref{eq:sol2p4}, \eqref{eq:sol2p5}, and \eqref{eq:sol2p6}. Figures~\ref{fig:P1-T-C1},~\ref{fig:P2-T-r}, and~\ref{fig:P3-C1-r} show the predicted dependencies of $\mu^\ast$ on $T$ and $C_1$ for Problem~\ref{Maximizing consensus-Dynamic}, $T$ and $\underline{r}$ for Problem~\ref{Minimize resource allocation-Dynamic}, and $C_1$ and $\underline{r}$ for Problem~\ref{Minimum time control-Dynamic}, as well as results obtained using the successive continuation technique from Section~\ref{sec:sim-finite-motivation}. Note that the predicted results require the determination of $C_2$ for a given $\underline{r}$ from the implicit relationship \eqref{eq:C2Phi}. The agreement is excellent in all three cases.

\begin{figure}[ht]
\centering
\includegraphics[width=3.0in]{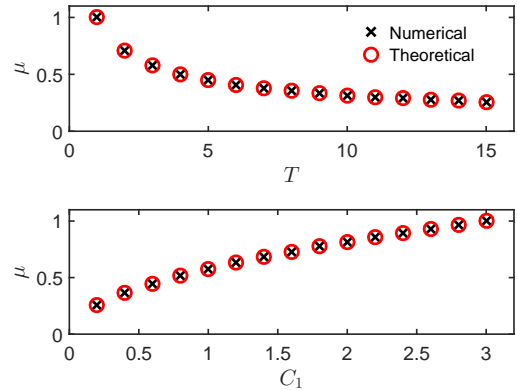}
\caption{The candidate solution $\mu^\ast=\sqrt{C_1/T}$ for Problem~\ref{Maximizing consensus-Dynamic} for varying $T$ (upper panel with $C_1=1$) and $C_1$ (lower panel with $T=3$). Here, and throughout the paper, crosses represent numerical data obtained using the successive continuation algorithm applied to the original optimization problem, while circles identify theoretical data predicted using the solution for the corresponding reference problem.}
\label{fig:P1-T-C1}
\end{figure}

\begin{figure}[ht]
\centering
\includegraphics[width=3.0in]{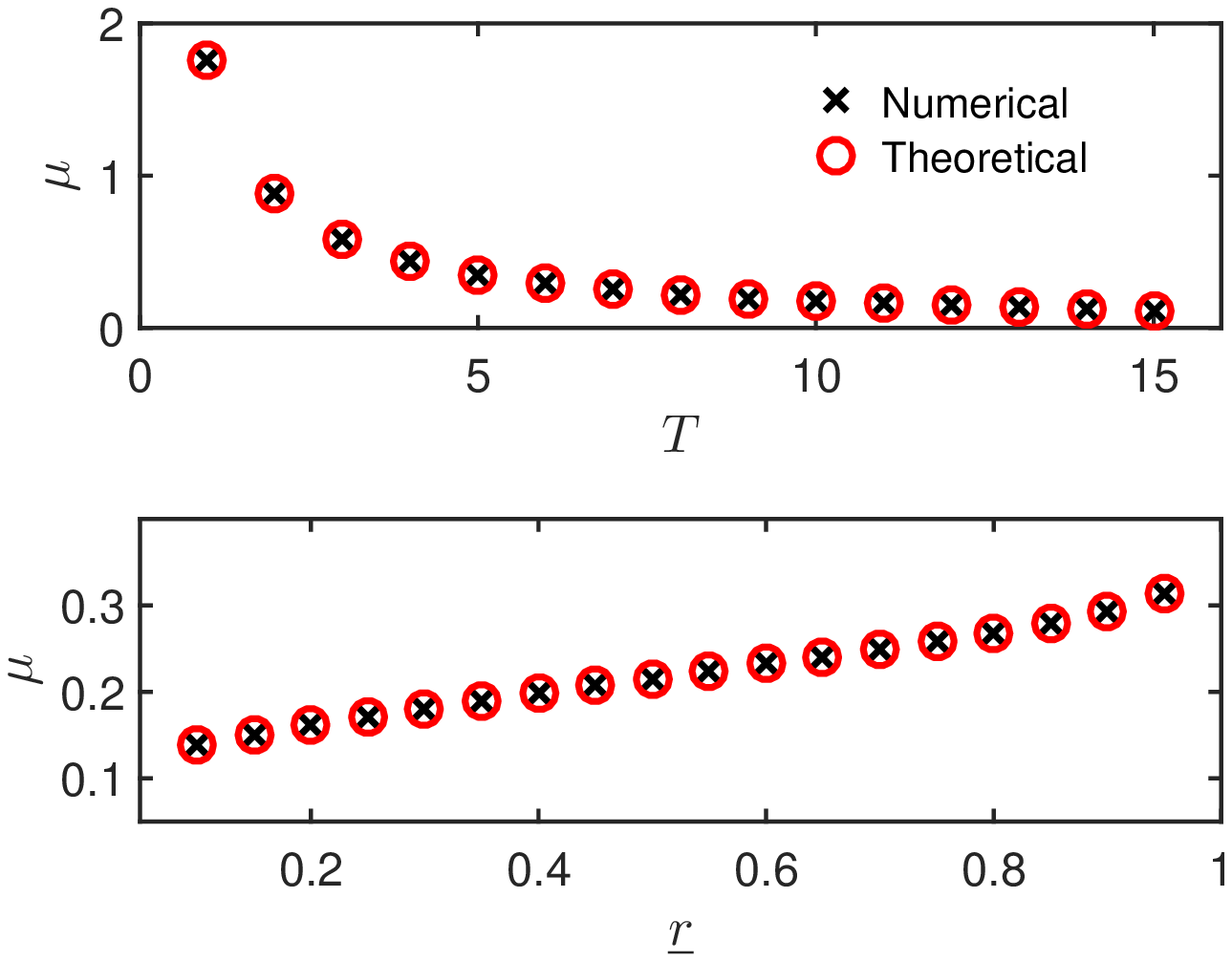}
\caption{The candidate solution $\mu^\ast=C_2(\underline{r})/T$ for Problem~\ref{Minimize resource allocation-Dynamic} for varying $T$ (upper panel with $\underline{r}=0.9$) and $\underline{r}$ (lower panel with $T=6$).}
\label{fig:P2-T-r}
\end{figure}

\begin{figure}[ht]
\centering
\includegraphics[width=3.0in]{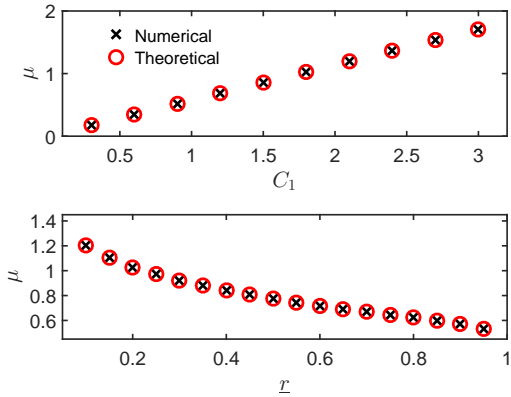}
\caption{The candidate solution $\mu^\ast=C_1/C_2(\underline{r})$ for Problem~\ref{Minimum time control-Dynamic} for varying $C_1$ (upper panel with $\underline{r}=0.9$) and $\underline{r}$ (lower panel with $C_1=1$).}
\label{fig:P3-C1-r}
\end{figure}

\subsubsection{Dependence on initial conditions and problem parameters}
\label{sec:sens-ic-p}
It follows from Theorem~\ref{theo1} that the optimal $\mu$ depends only on $T$ and $C_1$ and is thus independent of the initial conditions $z_0$ and problem parameters $p$. By extension, the optimal solution of Problem~\ref{Maximizing consensus-Dynamic} is also independent of the initial conditions $x_i(0)$. 

In contrast, it follows from Theorems~\ref{theo2} and~\ref{theo3} that the optimal $\mu$ depend on $T$, $\underline{\Phi}$, $z_0$,
and $p$ due to the coupling condition \eqref{eq:C2Phi}. By extension, the optimal solutions to Problems~\ref{Minimize resource allocation-Dynamic} and~\ref{Minimum time control-Dynamic} depend on the initial conditions $x_i(0)$.

\subsection{Optimality of stationary solutions}
We consider the possible optimality of $\Phi(z(T))$ for the candidate $\mu$ obtained from the solutions to Problems~\ref{P1},~\ref{P2}, and~\ref{P3} with $g(\mu)=\mu^2$.

For Problem~\ref{P1}, consider a series expansion of $\mu$ in terms of a complete orthonormal basis with coefficients $c_i$ and the first basis function equal to the constant $1/\sqrt{T}$. It follows that $\mathcal{I}(\mu)=c_1\sqrt{T}$ and $\mathcal{G}(\mu)=\sum_ic_i^2$. On the sphere $\mathcal{G}(\mu)=C_1$, $\tau(T)=\mathcal{I(\mu)}\leq \sqrt{C_1T}$ and $\mathcal{I}(\mu)$ is stationary for $\mu(t)\equiv\pm\sqrt{C_1/T}$, consistent with the solution of Problem~\ref{P1}. By Theorem~\ref{theo1}, it follows that the positive solution corresponds to a local maximum of $\Phi(z(T))$ if $\Phi_h>0$ and a local minimum if $\Phi_h<0$. From Fig.~\ref{fig:Finite_rescaled_centroid-finite}, we conclude that the constant $\mu$ listed in Table~\ref{FPs-P1} corresponds to a local maximum of $|\hat{r}(T)|$.

For Problem~\ref{P2}, note that the constraint $\Phi(z(T))=\underline{\Phi}$ is equivalent to $\mathcal{I}(\mu)=C_2$ in the time-rescaled system. It follows that the constraint manifold is an affine space in the coefficients $c_i$. By the convexity of $\mathcal{G}(\mu)$, the stationary solution $\mu(t)\equiv C_2/T$ corresponds to a global minimum of $\mathcal{G}(\mu)$~\cite{luenberger1997optimization}. By Theorem~\ref{theo2}, this $\mu$ is a global minimum also under the original constraint. It follows that the solution to Problem~\ref{Minimize resource allocation-Dynamic} found in Section~\ref{sec:sim-finite-motivation} is a global minimum, since here $C_2\approx1.7527$.

For Problem~\ref{P3}, we lack a general theory and leave this for future study.
The successive continuation approach shows that the stationary solution found for Problem~\ref{Minimum time control-Dynamic} is a local minimum under variations only in $p_1$. It follows that the corresponding $\mu$ does not correspond to a maximum of $T$, but we cannot \emph{a priori} exclude the possibility of a saddle.

\section{Additional applications}
\label{sec:two-more-appl}
\subsection{Synchronization in the continuum limit}
\subsubsection{System dynamics}
We set the natural frequency in the derivation in~\cite{2016kuramoto,scale-free-kuramoto} to zero and generalize $\mu$ to $\mu(t)$ in order to obtain a description of the Kuramoto model \eqref{eq:ode-degree} of an uncorrelated network of phase oscillators with degree distribution $p(k)$ in the continuum limit. This can be described in terms of the population density $\rho(x,t|k)$ of oscillators that have phase $x+\omega t$ at time $t$ for a \emph{given} degree $k$ and the \emph{continuity equation} 
\begin{equation}
\label{eq:contEqs}
\frac{\partial \rho}{\partial t}+\mu(t) k\frac{\partial }{\partial x}\left(\rho\Im(re^{-\mathrm{i}x})\right)=0,
\end{equation}
where the \emph{order parameter}
\begin{align}
\label{eq:orderPar}
r(t) &:= \frac{1}{\langle k\rangle}\int \mathrm{d}k'\int \mathrm{d}x'\, {k'p(k')}\rho(x',t|k')  e^{\mathrm{i}x'},
\end{align}
the mean $\langle k\rangle:=\int k'p(k')\,\mathrm{d}k'$, and $\int \rho(x,t|k)\,\mathrm{d}x =1$ for all $t$ and $k$.

Following Ott-Antonsen reduction~\cite{ott}, we restrict to
\begin{equation}
    \rho(x,t|k) = \frac{1+\sum_{n=1}^\infty [\alpha(t,k)]^ne^{\mathrm{i}n x}+c.c.}{2\pi},
\end{equation}
where $\alpha$ is a complex-valued function. This ansatz satisfies \eqref{eq:contEqs} provided that
\begin{equation}
\label{eq:pde-config1}
\dot{\alpha}+\frac{\mu(t) k}{2}(r\alpha^2-r^\ast)=0,
\end{equation}
where, now,
\begin{equation}
\label{eq:orderParRed}
    r(t)=\frac{1}{\langle k\rangle}\int\mathrm{d}k'\,k' p(k')\,\alpha^\ast(t,k')
\end{equation}
and $^\ast$ denotes complex conjugation. The system dynamics in the reduced manifold are thus governed by a \emph{partial integro-differential} equation.

To apply the established analytical framework, we focus on the case of $M$ distinct degree classes. To this end, let $\{k_i\}_{i=1}^M$ represent the set of corresponding degrees, and let $\hat{p}_i$ denote the fraction of nodes with degree $k_i$ in the \emph{whole} population, such that $\sum_{i=1}^M \hat{p}_i=1$. With $\alpha_i(t):=\alpha(t,k_i)$, the partial integro-differential equation then reduces to a set of ordinary differential equations
\begin{equation}
\label{eq:alpha-conf-t}
\dot{\alpha}_i+\frac{\mu(t)k_i}{2}(r\alpha^2_i-r^\ast)=0,\,\, i=1,...,M
\end{equation}
in terms of the order parameter 
\begin{equation}
    r = \frac{1}{\langle k\rangle}\sum_{j=1}^M {k_j\hat{p}_j} \alpha^\ast_j.
\end{equation}

\subsubsection{Characterization of synchronization}
In the continuum limit, the \emph{centroid} in the complex plane of the phase oscillators is given by
\begin{align}
\hat{r}(t) & :=\int \mathrm{d}k'\int \mathrm{d}x' {p(k')}\rho(x',t|,k')  e^{\mathrm{i}x'}
\end{align}
which reduces to $\hat{r}=\sum_{j=1}^M \hat{p}_j \alpha^\ast_j$ in the case of finitely many subpopulations. We use its amplitude $|\hat{r}|$ to characterize the synchronization level.

\subsubsection{Optimization of dynamics}
We again consider the three optimal control problems defined in Section~\ref{sec:optProbForm}, albeit with the new definition of $\hat{r}$ and the corresponding system dynamics~\eqref{eq:alpha-conf-t} instead of \eqref{eq:ode-degree}. Here, we take $\hat{p}_i\sim k_i^{-\gamma}$ motivated by the common use of power-law distributions to characterize empirical networks~\cite{powerLaw}. The exponent $\gamma$ acts as a problem parameter for the system dynamics.

As evident by inspection of the real and imaginary parts of~\eqref{eq:alpha-conf-t}, this non-autonomous dynamical system is separable. We conclude that the implications of Theorems~\ref{theo1}-\ref{theo3} still hold. In particular, with $\Phi(z(T)):=|\hat{r}(T)|$ and $\underline{\Phi}=\underline{r}$, the search for solutions to Problems~\ref{Maximizing consensus-Dynamic},~\ref{Minimize resource allocation-Dynamic} and~\ref{Minimum time control-Dynamic} is again replaced with the search for solutions to Problems~\ref{P1},~\ref{P2} and~\ref{P3}, provided that $\Phi_h\neq0$. We again expect that the optimal $\mu$ are constant functions that are independent of the initial conditions $\alpha_i(0)$ and the problem parameter $\gamma$ in the case of  Problem~\ref{Maximizing consensus-Dynamic}, and dependent on both $\alpha_i(0)$ and $\gamma$ in the case of Problems~\ref{Minimize resource allocation-Dynamic} and~\ref{Minimum time control-Dynamic}. 

These predictions are verified by applying the computational optimization technique used in Section~\ref{sec:sim-finite-motivation} to the corresponding optimal control problems. Consider, for illustration, the case of $M=10$, $k_i=i$, and initial conditions $\alpha_j(0)=\alpha_0 e^{\mathrm{i}2\pi(j-1)/M}$ for some positive $\alpha_0$. \textcolor{black}{It follows from numerical simulation of the time-rescaled dynamics that $\Phi(\hat{z}(\tau))$ is monotonically increasing with $\tau$, i.e., that $\Phi_h>0$.}


For Problem~\ref{Maximizing consensus-Dynamic}, the predicted optimal $\mu(t)\equiv\sqrt{C_1/T}$ is independent of the system dynamics (and, consequently, of the initial conditions and problem parameter). The results of the computational analysis are not graphed here, since they are consistent with Fig.~\ref{fig:P1-T-C1}. The independence with respect to $\alpha_0$ and $\gamma$ is shown in Fig.~\ref{fig:P1-inf-alpha-gamma-mu}.

\begin{figure}[ht]
\centering
\includegraphics[width=3.0in]{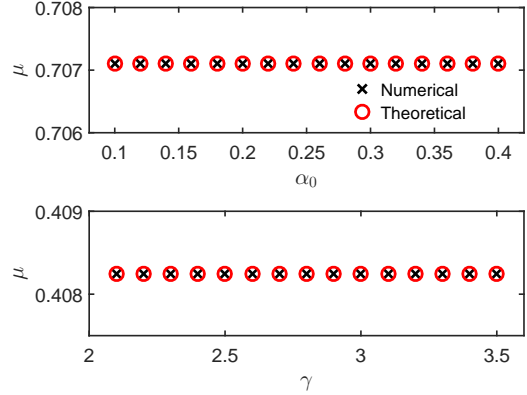}
\caption{The candidate solution $\mu^\ast=\sqrt{C_1/T}$ for Problem~\ref{Maximizing consensus-Dynamic} in the continuum limit for varying $\alpha_0$ (upper panel with $\gamma=2.2$, $T=6$, and $C_1=3$) and $\gamma$ (lower panel with $\alpha_0=0.1$, $T=6$, and $C_1=1$).}
\label{fig:P1-inf-alpha-gamma-mu}
\end{figure}

For Problem~\ref{Minimize resource allocation-Dynamic}, the predicted optimal $\mu(t)\equiv C_2/T$ requires the determination of $C_2$ in terms of $\underline{r}$ from the rescaled system dynamics and the coupling condition \eqref{eq:C2Phi}. We obtain similar agreement to that in Fig.~\ref{fig:P2-T-r}, albeit for a different functional dependence on $T$ and $\underline{r}$. The predicted dependence on $\alpha_0$ and $\gamma$ is validated by the results in Fig.~\ref{fig:P2-inf-alpha0-gamma-mu}.

\begin{figure}[ht]
\centering
\includegraphics[width=3.0in]{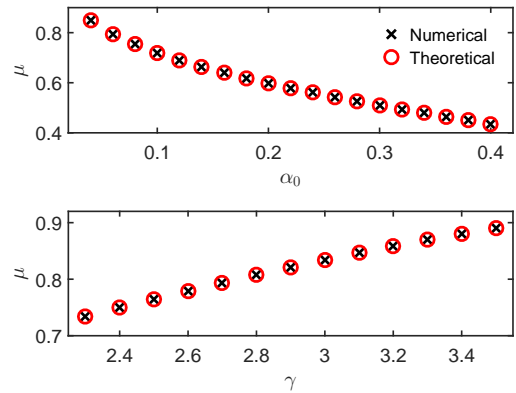}
\caption{The candidate solution $\mu^\ast=C_2(\alpha_0,\gamma,\underline{r})/T$ for Problem~\ref{Minimize resource allocation-Dynamic} in the continuum limit for varying $\alpha_0$ (upper panel with $\gamma=2.2$, $T=6$, and $\underline{r}=0.9$) and $\gamma$ (lower panel with $\alpha_0=0.1$, $T=6$, and $\underline{r}=0.9$).}
\label{fig:P2-inf-alpha0-gamma-mu}
\end{figure}

For Problem~\ref{Minimum time control-Dynamic}, the predicted optimal $\mu(t)\equiv C_1/C_2$ again requires the determination of $C_2$ in terms of $\underline{r}$ from the rescaled system dynamics and the coupling condition \eqref{eq:C2Phi}. We obtain similar agreement to that in Fig.~\ref{fig:P3-C1-r}, albeit for a different functional dependence on $C_1$ and $\underline{r}$. Fig.~\ref{fig:P3-inf-C1-r-T} validates the predicted dependence of the optimal time $T=C_2^2/C_1$ on $C_1$ and $\underline{r}$.

\begin{figure}[ht]
\centering
\includegraphics[width=3.0in]{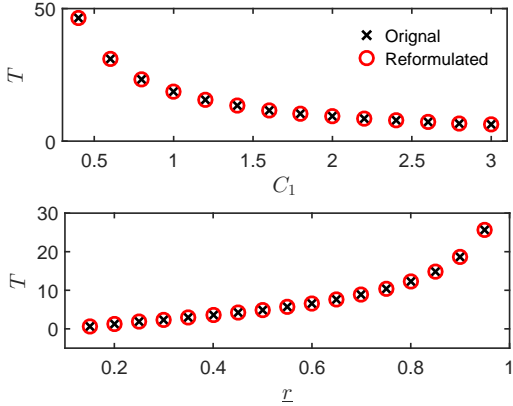}
\caption{The stationary value $T=C_2^2(\alpha_0,\gamma,\underline{r})/C_1$ for Problem~\ref{Minimum time control-Dynamic} in the continuum limit for varying $C_1$ (upper panel with $\alpha_0=0.1$, $\gamma=2.2$, and $\underline{r}=0.9$) and $\underline{r}$ (lower panel with $\alpha_0=0.1$, $C_1=1$, and $\gamma=2.2$).}
\label{fig:P3-inf-C1-r-T}
\end{figure}

\subsection{Spreading dynamics on activity-driven networks}

\subsubsection{Activity driven networks (ADNs)~\cite{activity-driven}}
Consider a network in the continuum limit, in which each node interacts with randomly selected other nodes at a constant rate $a$ per unit time, sampled from a probability distribution $p(a)$.

\subsubsection{Spreading dynamics}
Consider a susceptible-infected (SI) model of information spreading with $I(a,t)$ and $S(a,t)$ equal to the fractions of infected and susceptible agents with activity rate $a$ at time $t$. The dynamics is governed by the partial integro-differential equation (generalizing $\beta$ in \cite{activity-driven} to $\beta(t)$)
\begin{equation}
\label{eq:PDE-spreading}
 \frac{\partial I}{\partial t} =\beta(t)(1-I)(a\langle I\rangle+\langle aI\rangle)
\end{equation}
where $\langle\cdot\rangle$ denotes the expected value w.r.t.~$p(a)$ and $\beta$ is the transmission probability of information. Here, $a\langle I\rangle$ and $\langle aI\rangle$ correspond to \emph{active} and \emph{passive} mechanisms of infection, respectively.

To apply the established analytical framework, we focus on the case of $M$ distinct rate classes. To this end, let $\{a_i\}_{i=1}^M$ represent the set of corresponding interaction rates, and let $\hat{p}_i$ denote the fraction of nodes with rate $a_i$ in the \emph{whole} population, such that $\sum_{i=1}^M \hat{p}_i=1$. With $I_i(t):=I(a_i,t)$, the partial integro-differential equation then reduces to a set of ordinary differential equations~\cite{zino2016}
\begin{equation}
\dot{I}_i=\beta(t)(1-I_i) \left(a_i\langle I\rangle +\langle aI\rangle\right),\, i=1,\cdots,M.
\end{equation}

\subsubsection{Optimization of dynamics}

With $\beta(t)$ as the control input and for given initial conditions, we consider the following three optimal control problems (cf.~Section~\ref{sec:optProbForm}).

\begin{problem}[Maximum spread]\label{Maximizing spreading-ADN}
Given $T,C_1\in\mathbb{R}_+$, find
\begin{align}
\label{eq:p7}
 & \argmax_{\beta\in C([0,T],\mathbb{R}_+)} \langle I\rangle(T) \quad \text{s.t.}\quad \mathcal{G}(\beta)= C_1.
\end{align}
\end{problem}

\begin{problem}[Minimum effort]\label{Minimize resource allocation-ADN}
Given $T,\underline{I}\in\mathbb{R}_+$, find
\begin{align}
\label{eq:p8}
 & \argmin_{\beta\in C([0,T],\mathbb{R}_+)} \mathcal{G}(\beta) \quad\text{s.t.}\quad\langle I\rangle(T)=\underline{I}.
\end{align}
\end{problem}

\begin{problem}[Minimum time]\label{Minimum time control-ADN}
Given $C_1,\underline{I}\in\mathbb{R}_+$, find
\begin{align}
\label{eq:p9}
 & \argmin_{\beta\in C([0,T],\mathbb{R}_+)} T \quad\text{s.t.}\quad \mathcal{G}(\beta)=C_1,\,\langle I\rangle(T)=\underline{I}.
\end{align}
\end{problem}

With $\Phi(z(T)):=\langle I\rangle(T)$ and $\underline{\Phi}:=\underline{I}$, the analytical framework developed in Section~\ref{sec:analytical} again applies, since the governing dynamic system is separable. The observations from Section~\ref{sec:red-opt} and~\ref{sec:sens-ic-p} apply by replacing Problem~\ref{Maximizing consensus-Dynamic} with Problem~\ref{Maximizing spreading-ADN}, Problem~\ref{Minimize resource allocation-Dynamic} with Problem~\ref{Minimize resource allocation-ADN}, and Problem~\ref{Minimum time control-Dynamic} with Problem~\ref{Minimum time control-ADN}.

We perform validation for the analytical solution of optimal $\beta$ in a way similar to Section~\ref{sec:valid-finite}. The details of such validation are not presented here given their similarity to the ones in Section~\ref{sec:valid-finite}. Instead, we validate the analytical solutions for the Lagrange multipliers, which provide the sensitivity of the objective functional with respect to constraints. In the following analysis, we take \textcolor{black}{$\hat{p}_i\sim a_i^{-\gamma}$~\cite{activity-driven},} $\gamma=2.2$, $M=5$, $a_i=0.2+0.4(i-1)$ and initial condition $I_i(0)=0.02$. \textcolor{black}{It follows from numerical simulation of the time-rescaled dynamics that $\Phi(\hat{z}(\tau))$ is monotonically increasing with $\tau$, i.e., that $\Phi_h>0$.}

For Problem~\ref{Maximizing spreading-ADN}, it follows from the proof of Theorem~\ref{theo1} that the Lagrange multiplier $\lambda$ associated with the budget constraint $\mathcal{G}(\beta)=C_1$ is given by $\lambda=\lambda_{\mathrm{ref}}\Phi_h$, where $\lambda_{\mathrm{ref}} = -\sqrt{T/C_1}/2$. This analytical solution is validated in Fig.~\ref{fig:P7-T-C1-lambda}, where the dependence of $\lambda$ on $T$ and $C_1$ are presented.

\begin{figure}[ht]
\centering
\includegraphics[width=3.0in]{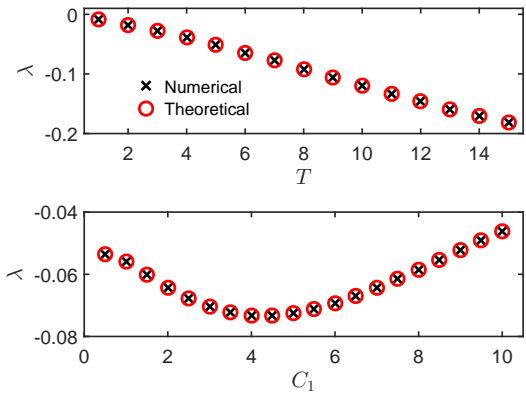}
\caption{The Lagrange multiplier $\lambda=-0.5\Phi_h\sqrt{T/C_1}$ to constraint $\mathcal{G}(\beta)=C_1$ in Problem~\ref{Maximizing spreading-ADN} for varying $T$ (upper panel with $C_1=2$) and $C_1$ (lower panel with $T=6$).}
\label{fig:P7-T-C1-lambda}
\end{figure}

For Problem~\ref{Minimize resource allocation-ADN}, it follows from the proof of Theorem~\ref{theo2} that the Lagrange multiplier $\lambda$ associated with the constraint $\langle I\rangle(T)=\underline{I}$ is given by $\lambda=\lambda_{\mathrm{ref}}/\Phi_h$, where $\lambda_{\mathrm{ref}} = - 2C_2/T$ and $C_2$ is related to $\underline{I}$ through the coupling condition \eqref{eq:C2Phi}. This analytical solution is validated in Fig.~\ref{fig:P8-T-I-lambda}, where the dependence of $\lambda$ on $T$ and $\underline{I}$ are presented.

\begin{figure}[ht]
\centering
\includegraphics[width=3.0in]{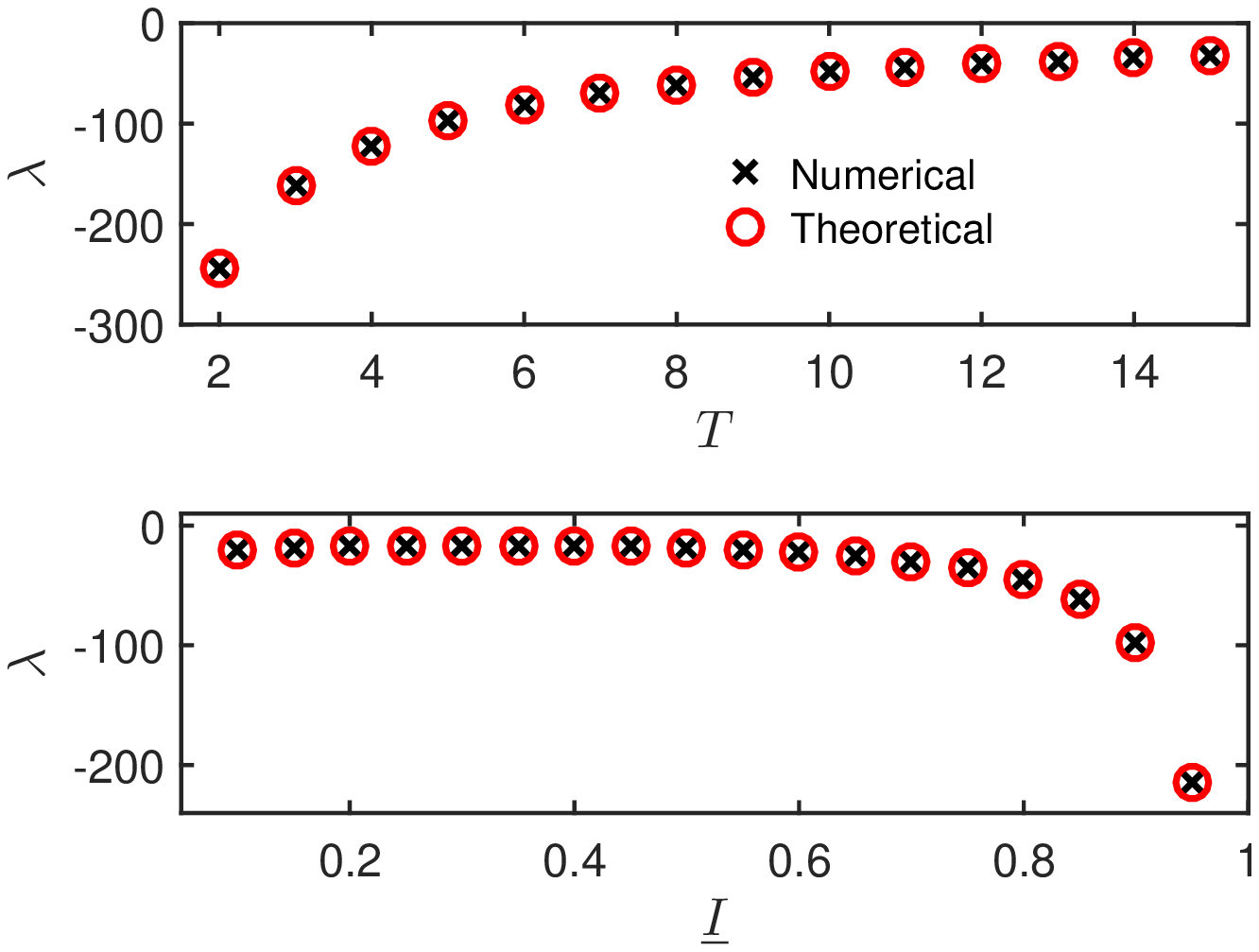}
\caption{The Lagrange multiplier $\lambda=-2C_2(\underline{I})/(T\Phi_h)$ to constraint $\langle I\rangle(T)=\underline{I}$ in Problem~\ref{Minimize resource allocation-ADN} for varying $T$ (upper panel with $\underline{I}=0.9$) and $\underline{I}$ (lower panel with $T=5$).}
\label{fig:P8-T-I-lambda}
\end{figure}

We denote the Lagrange multipliers to constraints $\mathcal{G}(\beta)=C_1$ and $\langle I\rangle(T)=\underline{I}$ in Problem~\ref{Minimum time control-ADN} to be $\lambda_1$ and $\lambda_2$ respectively.
It follows that $\lambda_1=\lambda_{1,\mathrm{ref}}$ and $\lambda_2=\lambda_{2,\mathrm{ref}}/\Phi_h$, as derived in the proof of Theorem~\ref{theo3}, where analytical $\lambda_{\mathrm{ref},1}$ and $\lambda_{\mathrm{ref},2}$ can be found in \eqref{eq:sol2p6-adj}. These analytical solutions are validated in Fig.~\ref{fig:P9-C1-I-lambda1} and Fig.~\ref{fig:P9-C1-I-lambda2}, where the dependence of $\lambda_1$ and $\lambda_2$ on some parameters are presented, respectively.

\begin{figure}[ht]
\centering
\includegraphics[width=3.0in]{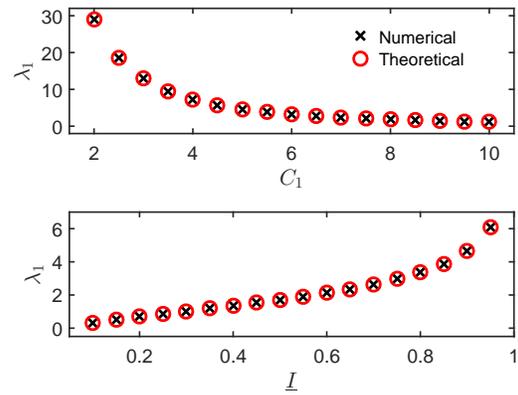}
\caption{Lagrange multiplier $\lambda_1=C_2^2(\underline{I})/C_1^2$ to the first constraint $\mathcal{G}(\beta)=C_1$ in Problem~\ref{Minimum time control-ADN} for varying $C_1$ (upper panel with $\underline{I}=0.9$) and $\underline{I}$ (lower panel with $C_1=5$).}
\label{fig:P9-C1-I-lambda1}
\end{figure}

\begin{figure}[ht]
\centering
\includegraphics[width=3.0in]{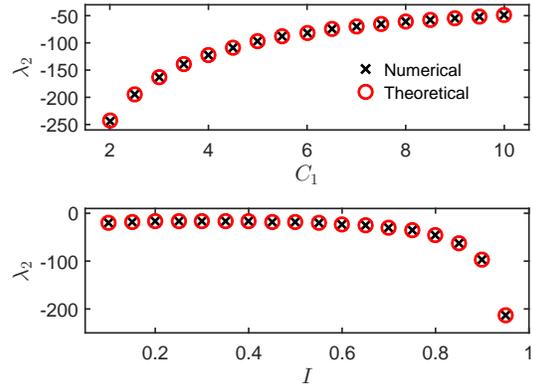}
\caption{Lagrange multiplier $\lambda_2=-2C_2(\underline{I})/(C_1\Phi_h)$ to the second constraint $\langle I\rangle(T)=\underline{I}$ in Problem~\ref{Minimum time control-ADN} for varying $C_1$ (upper panel with $\underline{I}=0.9$ ) and $\underline{I}$ (lower panel with $C_1=5$).}
\label{fig:P9-C1-I-lambda2}
\end{figure}

\section{Concluding Discussion}
\label{sec:conclusion}
We have established a unified analytical framework for a class of optimization problems whose dynamics are governed by separable non-autonomous systems. There are several opportunities for future study. Since we have assumed continuous control input in our framework, it is worth investigating the case of allowing discontinuities in the control input. One might further wish to consider optimization constrained also by inequality constraints, e.g., the boundedness of control input.

In Section~\ref{sec:two-more-appl}, we restricted attention to discrete distributions in order to apply the analytical framework established in Section~\ref{sec:analytical}. Nevertheless, this analytical framework can be extended also to the case of continuous distributions, where the dynamical system is governed by partial integro-differential equations, e.g., \eqref{eq:pde-config1} and \eqref{eq:PDE-spreading}. Specifically, consider a separable dynamical system $\partial{z}(t,a)/\partial t=\mu(t)h\left(z(t,a),\mathcal{F}(z(t,a),z(t,a'),a'),p\right)$, where $z:[0,\infty)\times\mathcal{J}_a\to\mathbb{R}^n$, $\mu:[0,\infty)\to\mathbb{R}_+$, $h:\mathbb{R}^n\times\mathbb{R}^n\times\mathbb{R}^s\mapsto\mathbb{R}^n$, and the nonlinear operator $
\mathcal{F}(z(t,a),z(t,a'),a'):=\int_{\mathcal{J}_a}  f\left(z(t,a),z(t,a'),a'\right)\mathrm{d}a'$ for some subset $\mathcal{J}_a$ of $\mathbb{R}$. Such a separable system can be reduced to an autonomous system by a straightforward generalization of Lemma~\ref{lema:odes}. It follows that Theorems~\ref{theo1},~\ref{theo2}, and~\ref{theo3} still apply by replacing $\Phi(z(T))$ with $\Phi(z(T,a),a):=\int_{\mathcal{J}_a} \phi(z(T,a),a)\,\mathrm{d}a$ and $\Phi_h$ with $\hat{\Phi}_{h}:=\int_{\mathcal{J}_a}\langle \phi_z(z(T,a),a),\underline{h}(T,a)\rangle\,\mathrm{d}a$, where
$\underline{h}(T,a)= h\left({z}(T,a),\mathcal{F}({z}(T,a),\hat{z}(T,a'),a'),p\right)$.



\section{Appendix}
\label{sec:appendix}

Let $g$ denote a positive, differentiable function such that $g'$ is not constant on any nonempty open interval.
\begin{lemma}\label{sec:sol2p1}
Consider the equations
\begin{equation}
\label{appeq:lemma1}
    \int_0^T g(\mu(t))\,\mathrm{d}t=C_1,\,1+\lambda g'(\mu(t))=0
\end{equation}
for $\mu\in C([0,T],\mathbb{R}_+)$. Then, if $C_1/T\in\range(g)$, $\mu$ equals some constant $\mu^\ast$. If $g'(\mu^\ast)\ne 0$, then $(\mu(t),\lambda)=(\mu^\ast,-1/g'(\mu^\ast))$ is a locally unique solution to \eqref{appeq:lemma1}.
\end{lemma}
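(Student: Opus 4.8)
The plan is to exploit the pointwise stationarity condition $1+\lambda g'(\mu(t))=0$ to show first that any admissible $\mu$ must be constant, and only then to pin down the constant and the multiplier. I would begin by observing that $\lambda=0$ is impossible, since it reduces the second equation to $1=0$; hence $\lambda\neq 0$ and the condition reads $g'(\mu(t))=-1/\lambda$ for every $t\in[0,T]$. Equivalently, the continuous curve $t\mapsto\mu(t)$ takes values entirely within the level set $S:=(g')^{-1}(-1/\lambda)$.

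The crux is a topological observation about $S$. By hypothesis $g'$ is not constant on any nonempty open interval, so $S$ contains no nondegenerate interval and therefore has empty interior. Since $\mu$ is continuous, its image $\mu([0,T])$ is a connected subset of $\mathbb{R}$, i.e.\ an interval; being contained in the interior-free set $S$, this interval cannot contain two distinct points and must reduce to a single value $\mu^\ast$. Thus $\mu(t)\equiv\mu^\ast$. Substituting back into the integral constraint gives $T g(\mu^\ast)=C_1$, i.e.\ $g(\mu^\ast)=C_1/T$, which is solvable precisely because $C_1/T\in\range(g)$, while the pointwise condition yields $\lambda=-1/g'(\mu^\ast)$ whenever $g'(\mu^\ast)\neq 0$.

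For local uniqueness I would use that \emph{every} solution of the system, not merely those near $(\mu^\ast,-1/g'(\mu^\ast))$, is constant by the argument above; the infinite-dimensional problem therefore collapses to the pair of scalar equations $g(c)=C_1/T$ and $1+\lambda g'(c)=0$ in the unknowns $(c,\lambda)$. Because $g'(\mu^\ast)\neq 0$, the map $g$ is locally invertible at $\mu^\ast$ (inverse function theorem), so $\mu^\ast$ is the unique value of $c$ near $\mu^\ast$ with $g(c)=C_1/T$; the relation $\lambda=-1/g'(c)$ then forces $\lambda=-1/g'(\mu^\ast)$, and no other solution lies in a sufficiently small neighborhood.

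I expect the main obstacle to be the constancy step rather than the bookkeeping: one must deploy the hypothesis on $g'$ exactly where it is needed, namely to guarantee that $S$ has empty interior, since otherwise a nonconstant $\mu$ ranging over an interval on which $g'$ happened to be constant could satisfy the pointwise condition while violating the conclusion. The local-uniqueness step is comparatively routine once constancy reduces the problem to finite dimensions, though it does lean on $g'(\mu^\ast)\neq 0$ to render $g$ locally injective near $\mu^\ast$.
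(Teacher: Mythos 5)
Your proof is correct and follows essentially the same route as the paper's: $\lambda\neq 0$ forces $g'(\mu(t))$ to be constant, the hypothesis on $g'$ together with continuity of $\mu$ forces $\mu\equiv\mu^\ast$, and the problem then collapses to the scalar equations $g(\mu^\ast)=C_1/T$ and $\lambda=-1/g'(\mu^\ast)$, with local uniqueness from $g'(\mu^\ast)\neq 0$. Your connectedness/empty-interior argument is simply a careful spelling-out of the paper's one-line claim that the assumption on $g'$ and continuity of $\mu$ imply constancy, so the two proofs coincide in substance.
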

\begin{proof}
Since $\lambda$ cannot equal $0$, it follows that $g'(\mu(t))$ must be constant. By the assumption on $g'$ and continuity of $\mu$ it follows that $\mu$ must equal some constant $\mu^\ast$. Substitution into the integral constraint yields $g(\mu^\ast)=C_1/T$, which can be locally uniquely inverted provided that $C_1/T\in\range(g)$ and $g'(\mu^\ast)\ne 0$. In this case,  $\lambda=-1/g'(\mu^\ast)$.
\end{proof}

\begin{lemma}\label{sec:sol2p2}
Consider the equations
\begin{equation}
\label{appeq:lemma2}
\int_0^T\mu(t)\,\mathrm{d}t=C_2,\, \lambda+g'(\mu(t)) = 0
\end{equation}
for $\mu\in C([0,T],\mathbb{R}_+)$. Then, $\mu$ equals some constant $\mu^\ast$ and $(\mu(t),\lambda)=(\mu^\ast,-g'(\mu^\ast))$ is a globally unique solution to \eqref{appeq:lemma2}.
\end{lemma}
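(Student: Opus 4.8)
The plan is to exploit the pointwise stationarity condition first and defer the integral constraint to the very end, paralleling the proof of Lemma~\ref{sec:sol2p1}. The second equation in \eqref{appeq:lemma2} rearranges to $g'(\mu(t)) = -\lambda$, so the composition $t \mapsto g'(\mu(t))$ is constant on $[0,T]$. Here there is no need to separately rule out $\lambda = 0$ as one does in Lemma~\ref{sec:sol2p1}: even $\lambda = 0$ forces $g'(\mu(t)) \equiv 0$, which is still a constant. This is the only structural simplification in the first half of the argument relative to Lemma~\ref{sec:sol2p1}.

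Next I would invoke the continuity of $\mu$. Since $[0,T]$ is connected and $\mu$ is continuous, the image $J := \mu([0,T])$ is an interval, and by the previous step $g'$ takes the single value $-\lambda$ on all of $J$. If $J$ had nonempty interior it would contain a nonempty open interval on which $g'$ is constant, contradicting the standing hypothesis that $g'$ is not constant on any nonempty open interval. Hence $J$ is a degenerate interval, i.e., a single point, so that $\mu(t) \equiv \mu^\ast$ for some constant $\mu^\ast$.

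With constancy of $\mu$ established, the integral constraint becomes the linear equation $T\mu^\ast = C_2$, giving $\mu^\ast = C_2/T$. This is the crux of the global (rather than merely local) uniqueness claim: because $\mathcal{I}$ is linear in $\mu$, the value $\mu^\ast$ is pinned down without any reference to the shape of $g$, in contrast with Lemma~\ref{sec:sol2p1}, where inverting $g(\mu^\ast) = C_1/T$ yields only a locally unique $\mu^\ast$. Positivity of $\mu^\ast$ is immediate from $C_2, T \in \mathbb{R}_+$, and the pointwise condition then fixes $\lambda = -g'(\mu^\ast)$ uniquely.

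The main obstacle is the step upgrading ``$g' \circ \mu$ constant'' to ``$\mu$ constant.'' The hypothesis that $g'$ is nonconstant on every nonempty open interval is exactly what is needed here, and it is used together with the intermediate-value property of the continuous map $\mu$. Everything downstream is routine algebra, and the only substantive gain over Lemma~\ref{sec:sol2p1} is that the linearity of $\mathcal{I}$ removes the local-versus-global gap in the uniqueness conclusion.
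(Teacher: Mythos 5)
Your proposal is correct and follows essentially the same route as the paper's proof: the stationarity condition forces $g'\circ\mu$ to be constant, continuity of $\mu$ plus the standing hypothesis on $g'$ then forces $\mu\equiv\mu^\ast$, and the linear integral constraint pins down $\mu^\ast=C_2/T$ globally, with $\lambda=-g'(\mu^\ast)$ following immediately. Your connectedness/intermediate-value argument simply makes explicit the step the paper compresses into ``by the assumption on $g'$ and continuity of $\mu$.''
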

\begin{proof}
By the assumption on $g'$ and continuity of $\mu$, the fact that $g'(\mu(t))=-\lambda$ is constant implies that $\mu$ must equal some constant $\mu^\ast$. Substitution into the integral constraint yields the unique solution $\mu^\ast=C_2/T$, from which it follows that $\lambda=-g'(\mu^\ast)$.
\end{proof}

\begin{lemma}\label{sec:sol2p3}
Consider the equations
\begin{gather}
\label{appeq:lemma3_1}
\int_0^Tg(\mu(t))\,\mathrm{d}t=C_1,\, \int_0^T\mu(t)\,\mathrm{d}t=C_2,\\ \lambda_1g'(\mu(t))+\lambda_2=0,\\
\label{appeq:lemma3_3}
\quad 1+\lambda_1g(\mu(T))+\lambda_2\mu(T)=0
\end{gather}
for $\mu\in C([0,T],\mathbb{R}_+)$. Then, if $C_1/C_2\in\range(\hat{g})$ for $\hat{g}(\mu):=g(\mu)/\mu$, $\mu$ equals some constant $\mu^\ast$. If $\gamma^\ast:=\mu^\ast g'(\mu^\ast)-g(\mu^\ast)\ne 0$, then
\begin{equation}
    (\mu(t),T,\lambda_1,\lambda_2)=\left(\mu^\ast,\frac{C_2}{\mu^\ast},\frac{1}{\gamma^\ast},-\frac{g'(\mu^\ast)}{\gamma^\ast}\right)
\end{equation}
is a locally unique solution to \eqref{appeq:lemma3_1}-\eqref{appeq:lemma3_3}.
\end{lemma}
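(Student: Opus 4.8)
The plan is to follow the template of Lemmas~\ref{sec:sol2p1} and~\ref{sec:sol2p2}: first deduce that any admissible $\mu$ must be constant, then reduce the two integral constraints to algebraic relations that pin down $\mu^\ast$ and $T$, and finally solve the two adjoint equations for $\lambda_1$ and $\lambda_2$. The first task is to show $\mu$ is constant, and the key preliminary observation is that $\lambda_1$ cannot vanish. Indeed, if $\lambda_1=0$, then the stationarity condition $\lambda_1 g'(\mu(t))+\lambda_2=0$ forces $\lambda_2=0$, whereupon the terminal condition $1+\lambda_1 g(\mu(T))+\lambda_2\mu(T)=0$ collapses to $1=0$, a contradiction. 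Hence $\lambda_1\ne0$, so $g'(\mu(t))=-\lambda_2/\lambda_1$ is constant in $t$; since $g'$ is not constant on any nonempty open interval and $\mu$ is continuous, $\mu$ must equal some constant $\mu^\ast$.

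With $\mu\equiv\mu^\ast$, the two integral constraints become $Tg(\mu^\ast)=C_1$ and $T\mu^\ast=C_2$. Eliminating $T$ by division yields $\hat g(\mu^\ast)=g(\mu^\ast)/\mu^\ast=C_1/C_2$, which admits a solution $\mu^\ast$ precisely when $C_1/C_2\in\range(\hat g)$; the value $T=C_2/\mu^\ast$ then follows. It remains to solve the adjoint system evaluated at the constant argument, namely $\lambda_1 g'(\mu^\ast)+\lambda_2=0$ together with $1+\lambda_1 g(\mu^\ast)+\lambda_2\mu^\ast=0$. Substituting $\lambda_2=-\lambda_1 g'(\mu^\ast)$ from the first relation into the second gives $1-\lambda_1\bigl(\mu^\ast g'(\mu^\ast)-g(\mu^\ast)\bigr)=0$, i.e. $1-\lambda_1\gamma^\ast=0$, so $\lambda_1=1/\gamma^\ast$ and $\lambda_2=-g'(\mu^\ast)/\gamma^\ast$ provided $\gamma^\ast\ne0$, reproducing the claimed tuple.

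The step I expect to require the most care is local uniqueness, and the clean way to dispatch it is to recognize that the nondegeneracy hypothesis $\gamma^\ast\ne0$ is exactly the transversality needed to invert $\hat g$. Indeed, $\hat g'(\mu)=(\mu g'(\mu)-g(\mu))/\mu^2$, so $\hat g'(\mu^\ast)=\gamma^\ast/(\mu^\ast)^2$, which is nonzero precisely when $\gamma^\ast\ne0$. Thus $\mu^\ast$ is a locally unique root of $\hat g(\mu)=C_1/C_2$, and since $T=C_2/\mu^\ast$, $\lambda_1=1/\gamma^\ast$, and $\lambda_2=-g'(\mu^\ast)/\gamma^\ast$ are then given by explicit continuous formulas in $\mu^\ast$, the full tuple $(\mu^\ast,T,\lambda_1,\lambda_2)$ is locally unique. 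I would close by emphasizing that the single condition $\gamma^\ast\ne0$ simultaneously guarantees solvability of the adjoint equations and local invertibility of $\hat g$, which is what makes the reduction to a constant control both consistent and nondegenerate.
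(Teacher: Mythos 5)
Your proposal is correct and follows essentially the same route as the paper's proof: rule out $\lambda_1=0$ (the paper asserts this without the explicit contradiction you supply), conclude $\mu\equiv\mu^\ast$ from constancy of $g'(\mu(t))$, reduce the integrals to $\hat{g}(\mu^\ast)=C_1/C_2$ and $T=C_2/\mu^\ast$, and invoke $\hat{g}'(\mu^\ast)=\gamma^\ast/\mu^{\ast 2}\neq0$ for local uniqueness before solving the adjoint equations for $\lambda_1$ and $\lambda_2$. The only difference is that you spell out steps the paper leaves implicit, which is fine.
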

\begin{proof}
Since $\lambda_1$ cannot equal $0$, it follows that $g'(\mu(t))$ must be constant. By the assumption on $g'$ and continuity of $\mu$ it follows that $\mu$ must equal some constant $\mu^\ast$. Substitution into the integral constraints yields $T^\ast=C_2/\mu^\ast$ and  $\hat{g}(\mu^\ast)=C_1/C_2$ , which can be locally uniquely inverted provided that $C_1/C_2\in\range(\hat{g})$ and $\hat{g}'(\mu^\ast)=\gamma^\ast/\mu^{\ast2}\ne 0$. In this case, $\lambda_1=1/\gamma^\ast$ and $\lambda_2=-g'(\mu^\ast)/\gamma^\ast$.
\end{proof}

\bibliographystyle{IEEEtran}
\bibliography{manuscript}


\end{document}